\documentclass{amsart}
\usepackage{amsmath}
\usepackage{amsfonts}
\usepackage{amssymb}
\usepackage{amscd}
\usepackage[all]{xy}
\usepackage[dvipsnames]{xcolor}
\usepackage{tikz-cd} 
\usepackage{pdfsync}
\usepackage{url}
\usepackage{algorithm}
\usepackage{algorithmic}

\usepackage{framed} % For leftbar

\usepackage{amssymb,amsmath,amsthm,graphicx,amscd,eufrak,hyperref,}
\theoremstyle{definition}

\newtheorem{thm}{Theorem}[section]
\newtheorem*{mainresult}{Theorem}

\newtheorem{cor}[thm]{Corollary}

\newtheorem{condition}[thm]{Condition}

\theoremstyle{definition}
\newtheorem{remark}[thm]{Remark}

\newtheorem{example}[thm]{Example}

\newcommand{\CC}{{\mathbb C}}

\def\Jac{{\rm Jac}\,}

\newcommand{\Conormal}{\operatorname{Con}}
\newcommand{\DL}{\operatorname{DL}}

\DeclareMathOperator{\Graph}{Graph}
\DeclareMathOperator{\SO}{SO}
\DeclareMathOperator{\Sym}{Sym}
\DeclareMathOperator{\reg}{reg}

\theoremstyle{definition}

\newcommand{\defcolor}[1]{{\color{RoyalBlue}#1}}
\newcommand{\demph}[1]{\defcolor{{\sl #1}}} % emph colored
\newcommand{\JIR}[1]{{\color{black}#1}} % Jose's comments
\newcommand{\EH}[1]{{\color{black}#1}} % Emil's comments

\begin{document}
	\title[Data loci in algebraic optimization]{
		\bf Data loci in algebraic optimization}
	\author{Emil Horobe\c{t} and Jose Israel Rodriguez}
	
	\subjclass[2010]{13P25, 14M12, 14N10, 14Q99, 41A65.}
	\keywords{Parametric optimization, Euclidean Distance Degree, Data Loci, Constrained Critical Points, Conormal variety, Dual variety, Maximum likelihood degree}
	
	\maketitle
	
	\begin{abstract}
		We consider parametric optimization problems from an algebraic viewpoint. 
		The idea is to find all of the critical points of an objective function thereby determining a global optimum. 
		For generic parameters (data) \JIR{in the objective function} the number of critical points remains constant. This number is known as the algebraic degree of an optimization problem.	
		In this article, we go further by considering the inverse problem of finding parameters \JIR{of the objective function so it gives}  rise to critical points exhibiting a special structure. \JIR{For example  if the critical point is in the singular locus, has some symmetry, 
	or satisfies some other algebraic property.}
		Our main result is a  theorem describing such parameters. 
		% conditions when among the resulting local minima and maxima there is at least one which satisfies a given polynomial conditions.
%		In this article we provide examples, methods and algorithms to determine conditions on the parameters of some parametric optimization problems. These conditions determine when among the resulting local minima and maxima there is at least one which satisfies a given polynomial conditions (for example it is singular or symmetric). 
	\end{abstract}

	\section{Introduction}
	In many kinds of optimization problems (distance optimization, optimizing communication rate etc.) it is interesting to ask the question \JIR{of} whether the solution is satisfying certain meaningful (polynomial) conditions. For example one can be interested if the solution will be singular, have symmetry, 
	have coordinates summing to one, etc.
	Equally interesting is to ask the same question not only about the minimizer of the optimization problem, but about all of the local minima and maxima as well. 
	Even stronger, we want conditions for all of the critical points of the problem.
	In other words, we are considering a generalization of the inverse problem of determining the parameter data that exhibits a certain type of critical point of the objective function. 
	We provide examples, methods and algorithms to test \JIR{the} properties of a parametric optimization problem. 
	\JIR{This a well studied topic and a part of parametric polynomial system solving about which one can read in \cite{DR07, Lass}.}
	
	Our motivating example is the \demph{scaled distance function}. 
	A scaled distance function on $\mathbb{R}^n$ is prescribed by \JIR{a} parameter data $u\in\mathbb{R}^n$ and a fixed scaling vector $w\in\mathbb{R}^n$ as 
	\[
	d_{u}^{w}: \mathbb{R}^n\to \mathbb{R},
	\quad
	x\mapsto \displaystyle\sum_{i=1}^n w_i(u_i-x_i)^2.	
	\] 
	\JIR{For this special case, our} main problem is to solve 
	\begin{equation}\label{MinProblem1}
	\begin{cases}
	\text{minimize } d_{u}^{w}(x) \\
	\text{subject to }x\in X_{\mathbb{R}},
	\end{cases}
	\end{equation}
	where $X_{\mathbb{R}}$ is an affine variety in $\mathbb{R}^n$. We want to provide the set of parameters $u\in\mathbb{R}^n$ for which at least one of the critical points of Problem~\ref{MinProblem1} satisfy prescribed (polynomial) conditions.
	
	To use the algebraic techniques, we pose our problem over the algebraically closed field of complex numbers:
	For an \demph{objective function} $d_{u}^{w}:\mathbb{C}^n\to\mathbb{C}$
	given by 
	%$\displaystyle\sum_{i=1}^n w_i(u_i-x_i)^2$ 
	$u\in \mathbb{C}^n$ and $w\in \mathbb{C}^n$, determine the critical points of $d_{u}^{w}|_X$ where $X$ denotes Zariski closure of $X_{\mathbb{R}}$.
	A practically meaningful (e.g. positive, real) critical point of $d_{u}^{w}|_{X_\mathbb{R}}$ will be among the set of complex critical  points of $d_{u}^{w}|_{X}$.
	
	\JIR{A special case of our main theorem (Theorem~\ref{Structurethm}) is stated next.}
	\begin{mainresult}[Special Case: Distance Function]\label{thm_beginning}
		% Need a domain and range for $\Gamma$.
		Let $A$ be a subvariety of $X$, then the variety of parameters $u\in \mathbb{C}^n$ for which Problem~\ref{MinProblem1} will have a critical point in $A$ is given by the closure of
		\[
		\Gamma(\Conormal(X)\cap\Conormal(A))\subset\mathbb{C}^n\times\mathbb{C}^{n},
		\]
		where $\Conormal(X)$ and $\Conormal(A)$ are the corresponding (affine) conormal varieties (see \ref{conormal}) in $\mathbb{C}^n\times\mathbb{C}^n$ 
		and \[\Gamma: \Conormal(X)\to \mathbb{C}^n,\quad  \Gamma(x,y)=\left(x_1 + \frac{1}{w_1}y_1,\dots, x_n +\frac{1}{w_n}y_n \right).\]
	\end{mainresult}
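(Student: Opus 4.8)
The plan is to recast the condition ``$d_u^w|_X$ has a critical point lying in $A$'' as an incidence condition inside the conormal variety of $X$, and then to identify the resulting locus of data with the $\Gamma$-image of $\Conormal(X)\cap\Conormal(A)$; the passage from critical points is essentially a change of variables, and the substance of the proof is a comparison of conormal varieties (alternatively this is the case of the general Theorem~\ref{Structurethm} obtained by plugging in $d_u^w$).

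First I would establish the conormal dictionary. A point $x\in X_{\mathrm{sm}}$ is a critical point of $d_u^w|_X$ exactly when $\nabla d_u^w(x)=-2\big(w_1(u_1-x_1),\dots,w_n(u_n-x_n)\big)$ is normal to $X$ at $x$; setting $y_i:=w_i(u_i-x_i)$, this is the statement $(x,y)\in\Conormal(X)$. The relation $y_i=w_i(u_i-x_i)$ is equivalent to $u_i=x_i+\tfrac1{w_i}y_i$, i.e.\ $u=\Gamma(x,y)$, so $\Gamma$ is precisely the map that recovers from an incidence pair $(x,y)$ the unique datum for which $x$ is critical with conormal direction $y$. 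Hence the set of data admitting a critical point somewhere in $A$ is $\Gamma(\mathcal W)$, where $\mathcal W:=\{(x,y)\in\Conormal(X):x\in A\}$; since $\Gamma$ is a morphism (so $\overline{\Gamma(\overline S)}=\overline{\Gamma(S)}$), the theorem amounts to
\[
\overline{\Gamma(\mathcal W)}\;=\;\overline{\Gamma\big(\Conormal(X)\cap\Conormal(A)\big)}.
\]

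For this I would first compare $\mathcal W$ with $\Conormal(X)\cap\Conormal(A)$ over the open dense locus $U\subseteq A$ where $A$ and $X$ are both smooth. There, using $A\subseteq X$ and hence $T_xA\subseteq T_xX$, so $N_xX\subseteq N_xA$, both $\mathcal W$ and $\Conormal(X)\cap\Conormal(A)$ restrict to the same subvariety $\{(x,y):x\in U,\ y\in N_xX\}$. Consequently their closures agree up to components lying over $A\setminus U$, i.e.\ over the singular loci of $A$ or of $X$; on that locus one still has the general containment $N_xX\subseteq\big(\text{fibre of }\Conormal(A)\text{ over }x\big)$ for $x\in A\cap X_{\mathrm{sm}}$, obtained by approaching $x$ through $A_{\mathrm{sm}}$ and taking Grassmannian limits of normal spaces (tangent spaces vary continuously along $X_{\mathrm{sm}}$). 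Applying $\Gamma$ and taking closures then gives the two inclusions, modulo the boundary behavior.

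The hard part is exactly that boundary behavior: one must check that the components of $\mathcal W$ and of $\Conormal(X)\cap\Conormal(A)$ sitting over $X_{\mathrm{sing}}$ or $A_{\mathrm{sing}}$ contribute nothing extra, and omit nothing, to $\overline{\Gamma(\cdot)}$. I would handle this by reducing to irreducible $X$ and $A$ — for which $\Conormal(\cdot)$ is irreducible of dimension $n$, so its restriction over any proper closed subset has strictly smaller dimension — together with the convention, standard in this area, that the critical points are the solutions of the conormal (Lagrange) equations; this is also the viewpoint from which the assertion specializes from Theorem~\ref{Structurethm}. The remaining arguments are dimension estimates and componentwise reductions, which are routine.
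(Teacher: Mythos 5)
Your proposal is correct and follows essentially the same route as the paper's proof of Theorem~\ref{Structurethm} (of which this statement is the special case $\Gamma(x,y)=(x_i+\tfrac{1}{w_i}y_i)_i$ with $\mathcal{H}=\emptyset$): rewrite criticality as the conormal incidence $y_i=w_i(u_i-x_i)$ so that $u=\Gamma(x,y)$, use $\mathcal{N}_xX\subseteq\mathcal{N}_xA$ over $A_{\reg}\cap X_{\reg}$ to identify $\{(x,y)\in\Conormal(X):x\in A\}$ with $\Conormal(X)\cap\Conormal(A)$, and pass to closures by a limiting argument over the singular loci. The paper handles the boundary step with the same brief sequence-and-limits remark that you flag as the "hard part," so the two arguments match in both structure and level of detail.
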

		
	\begin{remark}
		The conormal varieties  $\Conormal(X)$ and $\Conormal(A)$ 
		are $n$-dimensional in $\mathbb{C}^n\times\mathbb{C}^n$.
		\JIR{See  Section~\ref{ss:conormal} for definitions.}
		%One would expect by a version of Bertini's theorem~\ref{}, 
		So one would expect the intersection 
		$\Conormal(X)\cap \Conormal(A)$
		to be a finite set of points. 
		But when $A\subset X$, the intersection is much more interesting and can be positive dimensional.  
		For example if $A$ is a regular point in $X$, then the intersection 
		$\Conormal(X)\cap \Conormal(A)$ is the normal space at the regular point.
		%is expected to be $0$-dimensional right? This is because the conormal variety is $n$-dimensional 
		%and also codimension $n$. 
		%But it doesn't have to be zero dimensional as some of our examples will probably show. 
	\end{remark}

	\JIR{Our main result Theorem~\ref{Structurethm} } provides a framework for generalizing the previous theorem to 
	other optimizations problems besides the scaled distance function. 

\JIR{
\begin{mainresult}[Main theorem]
	Let $A\subseteq X$ be a subvariety and suppose that optimization problem \eqref{MinProblem} satisfies Condition~\ref{Condition_on_d}. Then, we have that
	\[
	\DL_A=\overline{\Gamma(\Conormal(X)\cap\Conormal(A)\setminus\mathcal{H})}\subset\mathbb{C}^n,
	\]
where $\mathcal{H}$  and $\Gamma$ are as in Condition~\ref{Condition_on_d}.
\end{mainresult}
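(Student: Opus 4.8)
The plan is to prove the two inclusions of the claimed equality, first reducing both sides to a statement about the conormal variety of $X$ over its smooth locus. By Condition~\ref{Condition_on_d} the rational map $\Gamma$ and the exceptional locus $\mathcal H$ are chosen so that $\Conormal(X)\setminus\mathcal H$ lies over $X_{\reg}$ and $\Gamma$ realises it as the data correspondence of the optimization problem: a point $x\in X_{\reg}$ is a critical point of $d_u|_X$ precisely when $u=\Gamma(x,y)$ for some $y$ with $(x,y)\in\Conormal(X)\setminus\mathcal H$. Since $\DL_A$ is the Zariski closure of the set of $u$ for which $d_u|_X$ has a critical point contained in $A$, it follows that
\[
\DL_A=\overline{\Gamma\bigl(\{\,(x,y)\in\Conormal(X)\setminus\mathcal H : x\in A\,\}\bigr)} .
\]
Hence it is enough to prove the set identity $\{\,(x,y)\in\Conormal(X)\setminus\mathcal H : x\in A\,\}=\bigl(\Conormal(X)\cap\Conormal(A)\bigr)\setminus\mathcal H$.

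For this identity the inclusion $\supseteq$ is immediate, since $\Conormal(A)$ maps into $A$ under the first projection. For $\subseteq$, take $(x,y)\in\Conormal(X)\setminus\mathcal H$ with $x\in A$. Because $\mathcal H$ contains $\pi^{-1}(X_{\mathrm{sing}})$, where $\pi\colon\Conormal(X)\to X$ is the first projection, we have $x\in X_{\reg}$, and over $X_{\reg}$ the conormal variety equals the honest conormal bundle, so $y\in N_xX=(T_xX)^{\perp}$. If $x\in A_{\reg}$, then $T_xA\subseteq T_xX$ gives $N_xX\subseteq N_xA$, hence $(x,y)\in\Conormal(A)$. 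If $x\in A\setminus A_{\reg}$, choose $x'\to x$ with $x'\in A_{\reg}$; since $X_{\reg}$ is open and contains $x$, we may take $x'\in A_{\reg}\cap X_{\reg}$, so $N_{x'}X\subseteq N_{x'}A$ and, by continuity of the normal bundle over $X_{\reg}$, $N_{x'}X\to N_xX$. Thus every vector of $N_xX$ is a limit of vectors in $N_{x'}A$ with $x'\to x$ in $A_{\reg}$, so $N_xX$ lies in the fibre over $x$ of $\Conormal(A)=\overline{\{(x',\nu):x'\in A_{\reg},\,\nu\in N_{x'}A\}}$; in particular $(x,y)\in\Conormal(A)$. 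In either case $(x,y)\in(\Conormal(X)\cap\Conormal(A))\setminus\mathcal H$, which proves the identity; applying $\Gamma$ and taking closures then gives the theorem. For the scaled distance function one substitutes the explicit $\Gamma(x,y)=(x_1+y_1/w_1,\dots,x_n+y_n/w_n)$, whose indeterminacy locus is empty, so that $\mathcal H$ reduces to $\pi^{-1}(X_{\mathrm{sing}})$ and drops out upon taking the closure, recovering Theorem~\ref{thm_beginning}.

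The main obstacle is the inclusion $\subseteq$ at points $x\in A\setminus A_{\reg}$: there the Zariski tangent space of $A$ can be strictly larger than $T_xX$, so the elementary comparison of tangent spaces is unavailable and one must argue directly that the closure $\Conormal(A)$ picks up all of $N_xX$ in its fibre over $x$. The limiting argument above does this analytically; to stay within the algebraic category one may instead use curve selection, choosing an algebraic arc in $A_{\reg}\cap X_{\reg}$ tending to $x$ together with a regular section of the conormal bundle of $X$ along it. A secondary point to check is that the locus $\mathcal H$ supplied by Condition~\ref{Condition_on_d} really does contain $\pi^{-1}(X_{\mathrm{sing}})$ along with the indeterminacy locus of $\Gamma$; this is what makes the reduction to $X_{\reg}$ legitimate and what keeps components of $\Conormal(X)$ lying entirely over $X_{\mathrm{sing}}$ from contributing spurious parameters to $\DL_A$.
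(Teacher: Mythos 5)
Your overall strategy is the same as the paper's: reduce the claim to the set identity
\[
\bigl(\Conormal(X)\cap (A\times\CC^n_y)\bigr)\setminus\mathcal H \;=\; \bigl(\Conormal(X)\cap\Conormal(A)\bigr)\setminus\mathcal H,
\]
prove it on the locus $A_{\reg}\cap X_{\reg}$ via the containment $\mathcal N_xX\subseteq\mathcal N_xA$, extend to the remaining points by a limiting argument, and then apply $\Gamma$ and take closures. This matches the paper's proof, which establishes exactly these two reductions and concludes with a sequence argument for points outside $A_{\reg}\cap X_{\reg}$.

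The genuine gap is your claim that ``$\mathcal H$ contains $\pi^{-1}(X_{\mathrm{sing}})$,'' which you use to conclude $x\in X_{\reg}$ and which you yourself flag as a point still to be checked. It is false in the paper's setting: Condition~\ref{Condition_on_d} produces $\mathcal H$ from the objective function $d_u$ alone (it is the locus of $(x,y)$ where $\nabla d_u(x)=y$ fails to determine $u$ uniquely), and it has nothing to do with $X$. In the motivating weighted-distance example $\mathcal H$ is empty, so for singular $X$ it certainly does not contain $\pi^{-1}(X_{\mathrm{sing}})$; in the water-filling example it is $\{y_1\cdots y_n=0\}$, again unrelated to $X_{\mathrm{sing}}$. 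Consequently your reduction to $x\in X_{\reg}$ is unjustified, and points of $\Conormal(X)\setminus\mathcal H$ lying over $A\cap X_{\mathrm{sing}}$ are not covered by your case analysis: your limit argument treats $x\in A\setminus A_{\reg}$ but still assumes $x\in X_{\reg}$, so that $y\in\mathcal N_xX$ and the normal spaces vary continuously near $x$. The repair is not to impose an extra hypothesis on $\mathcal H$ but to run the same closure/sequence argument uniformly for all $x\notin A_{\reg}\cap X_{\reg}$, which is what the paper does. These points cannot simply be discarded, since both sides of the asserted equality are closures of their restrictions over $A_{\reg}\cap X_{\reg}$; the limiting step is where the content lies, and it is precisely the step that becomes delicate when $A$ meets $X_{\mathrm{sing}}$.
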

}

\JIR{Our article is organized as follows.}	
\JIR{In Section~\ref{sec:conormal} we introduce notation and conditions for our main result.
In Section~\ref{Sec_Data_loci}  we prove the main result and give some corollaries regarding containments of the data locus. }
In Section~\ref{Sec_Exa} we discuss classical distance optimization concerning low rank and structured low rank approximations of matrices and tensors in Examples~\ref{rank_var}, \ref{form_control}, \ref{Hankel_exa}, \ref{low_rank}; we cover weighted distance optimization in Example~\ref{rat_norm}; we relate to the maximum likelihood (ML) degree in Remark~\ref{MLDSL}; \JIR{finally,} we show in Example~\ref{water} that optimizing communication rate also fits our problem setting.
	
	\section{Conormal map derived from the objective function}\label{sec:conormal}
	
	\subsection{ Pairing data with $\Gamma$}
	We are solving parametric optimization problems with polynomial constraints. More precisely we have the following parametric optimization problem: for any fixed vector of parameters $u\in \mathbb{R}^n$ solve 
	
	\begin{equation}\label{MinProblem}
	\begin{cases}
	\text{minimize } d_{u}(x), \\
	\text{subject to }x\in X_\mathbb{R},
	\end{cases}
	\end{equation}
	where $X$ is an affine variety in $\mathbb{R}^n$ that is the common zero set of the polynomials $f_1(x),\ldots,f_c(x)$ and $d_u(x)$ is some parametric objective function. 
	We will denote the collection of regular points by $X_{\reg}$.
	
	In order to find the minimizer algebraically we have to find all local minimizers and maximizers for \eqref{MinProblem}, hence all the \demph{constrained critical points} of $d_u$.  
	The set of constrained critical points of $d_{u}$ is  given by following:
	\begin{equation}\label{CritEqu}
	\big\{
	x\in\mathbb{C}^n 
	\,|\,
	\nabla d_u(x) \in \mathcal{N}_x X,
	\,
	x\in X_{\reg}
	\big\}.
	\end{equation}
	Here $\nabla d_u(x)$ denotes the partial derivatives of $d_u$ with respect to $x$, 
	and $\mathcal{N}_x X$ is the \demph{normal space} of $X$ at $x$, 
	\JIR{that is for a regular point $x\in X$
	\[
	\mathcal{N}_x X
		:=
		\left\{ y\in \CC^n : \text{ rank of } 
		\begin{bmatrix}
		y\\
		\nabla f_1(x)\\
		\vdots\\
		\nabla f_c(x)
		\end{bmatrix}
		\text{ is equal to the codimension of } X
		\right\}.
	\]
	}	
	In other words, a point $x\in X_{\reg}$ is a solution to \eqref{CritEqu} if and only if $\nabla d_u(x)=y$, for some $y\in \mathcal{N}_x X$.  
	
	\begin{remark}
		A classical approach to such problems is to use \demph{Lagrange multipliers}.
		If $X$ is defined by $f_1,\dots,f_k$, 
		 when $X$ is a complete intersection of $c$ hypersurfaces, see \cite[Section 5.5.3]{ConvOpt}. Indeed in this case, a solution $x$ in  \eqref{CritEqu} 
		 corresponds to a solution $(x,\lambda_1,\dots,\lambda_c)$ to
%
		 %\JIR{the solutions $x\in \mathbb{C}^n$ of this system}		
	\begin{equation}\label{LagrangeCritEqu}
%		(x,\lambda)\text{ is a solution to }		
		\begin{cases}
		\displaystyle\nabla d_u(x) + \sum_{i=1}^c \lambda_i \nabla f_i(x) = 0\\
		f_1(x) = f_2(x) = \ldots  = f_c(x) = 0.
		\end{cases}
		\end{equation}
	\end{remark}
\JIR{Moreover, the reverse correspondence holds. In the complete intersection case, the correspondence is one to one, but in the non-complete intersection there are infinitely many choices for $\lambda_1,\dots,\lambda_k$.}

\JIR{If one doesn't have a complete intersection, then choose $f_1,\dots,f_s$ a generating set of the radical ideal of $X$ and then solutions of \eqref{CritEqu} satisfy  $f_i(x)=0$ for $i=1,\dots, s$ and 
\[
		\left\{ x\in \CC^n : \text{ rank of } 
		\begin{bmatrix}
		\nabla d_u(x)\\
		\nabla f_1(x)\\
		\vdots\\
		\nabla f_s(x)
		\end{bmatrix}
		\text{ is less than or equal to the codimension of } X
		\right\}.
\]
This is equivalent to \eqref{LagrangeCritEqu} in the case $X$ is a complete intersection.
}

	Furthermore, to be able to use algebraic geometry techniques, we will need some assumptions on $d_u$. 
	\begin{condition}\label{Condition_on_d}
	Suppose there exists a hypersurface $\mathcal{H}\subset\mathbb{C}^n\times\mathbb{C}^n$ such that for all $(x,y)\in \mathbb{C}^n\times\mathbb{C}^n\setminus\mathcal{H}$,  
	there is a unique $u\in\mathbb{C}^n$, such that \[\nabla d_u(x)=y.\] 
		Now let us denote by \demph{$\Gamma(x,y):=u$ the unique solution} $u$ to $\nabla d_u(x)=y$, for any fixed $x$ and $y$. Furthermore we must require that $\Gamma$ is a rational function.
	\end{condition}
\JIR{
One way to view this condition, is that for generic $(x,y)\in \CC^n\times \CC^n$ there is a unique solution $u\in \CC^n$ to $\nabla d_u(x)=y$.
}

\JIR{	
\begin{remark}
One example where this condition doesn't hold is for the $p$-norm. 
However, there are techniques \cite{KKS21} to handle this case in a similar fashion to what we present here.
\end{remark}
	}
	
	In this setting, a point $x\in X_{\reg}$ is a solution to \eqref{CritEqu} if and only if there exists $y\in \mathcal{N}_x X$, such that \[\Gamma(x,y)=u.\] 
	A couple of examples for $\Gamma$ from classical optimization problems will follow.
	
	\begin{example}[Weighted distance optimization]\label{dist_opt_exam}
		
		Suppose that we want to solve for any parameter vector $u\in\mathbb{R}^n$ and fixed weight vector \JIR{$w\in(\mathbb{R}\setminus \{0\})^n$}, the weighted distance minimization problem 
		\[
		\begin{cases}
		\text{minimize} \displaystyle\sum_{i=1}^n w_i(u_i-x_i)^2,\\
		\text{subject to }x\in X_{\reg},
		\end{cases}
		\]
		with $X$ an affine variety in $\mathbb{R}^n$. Then, based on the discussion above, we get that all the critical points of the system (local minimizers and maximizers) satisfy that
		\[
		\begin{cases}
		\nabla \displaystyle\sum_{i=1}^n w_i(u_i-x_i)^2\in \mathcal{N}_x X,\\
		x\in X_{\reg}.
		\end{cases}
		\]
		That is equivalent to
		\[
		\begin{cases}
		(w_1(u_1-x_1),\ldots,w_n(u_n-x_n))\in \mathcal{N}_x X,\\
		x\in X_{\reg}.
		\end{cases}
		\]
		So we get that $x\in X_{\reg}$ is a critical point of the system if and only if there exists $y\in \mathcal{N}_x X$ such that $(w_1(u_1-x_1),\ldots,w_n(u_n-x_n))=y$. 
		\JIR{We see that in this case, Condition~\ref{Condition_on_d} is satisfied  for all $(x,y)\in\mathbb{C}^n\times \mathbb{C}^n$. 
		So for any fixed pair $(x,y)$, there is a unique $u$} such that the above condition is satisfied, namely \[\displaystyle u=\Gamma(x,y)=\left(x_1+\frac{1}{w_1}y_1,\ldots, x_n+\frac{1}{w_n}y_n\right).\]
		%So any critical point  $x\in X_{\reg}$ of the problem satisfies that there exists $y\in \mathcal{N}_xX$ such that
		%\[
		%u=\Gamma(x,y),
		%\] with $\Gamma(x,y)=\left(x_1+\frac{1}{w_1}y_1,\ldots, x_n+\frac{1}{w_n}y_n\right)$.
		Observe that when we choose the weight function to be all ones then we get back to classical distance optimization on algebraic varieties, namely to Euclidean Distance Degree theory, which is discussed in \JIR{the} article \cite{DHOST16} in detail.
		\JIR{Recently, an approach for understanding critical points of weighted distance functions on a variety as limits of critical points of perturbed functions has been explored in \cite{MR4048615} from the viewpoint of algebraic topology and addressed the multiview conjecture in~\cite{DHOST16}.}
		
		%\begin{remark}
		%Observe that not all weighted distance functions satisfy Condition~\ref{Condition_on_d}. If any of the weights is equal to zero then for fixed $x$ and $y$ there might not be any $u$ or there might be infinitely many $u$'s, such that $\nabla d_u(x)=y$.
		%\end{remark}
	\end{example}
	
	\begin{example}[Information theory-Water filling]\label{water}
		
		We consider the following paramet\-rized optimization problem in convex optimization (see \cite[Chapter $5$, Example~$5.2$]{ConvOpt}) \EH{that is also explored in~\cite{WaterFilling}.} For any parameter vector $u\in \mathbb{R}^n,$ with $u_i>0$
		\[
		\begin{cases}
		\text{minimize } \displaystyle\sum_{i=1}^n -\log(u_i+x_i),\\
		\text{subject to }x_i\geq 0\text{ and }\displaystyle\sum_{i=1}^n x_i=1.
		\end{cases}
		\]
		This problem arises in information theory, in allocating power to a
		set of $n$ communication channels. The variable $x_i$ represents the transmitter power allocated to the $i$-th channel, and $\log(u_i + x_i)$ gives the capacity or communication rate of the channel, so the problem is to allocate a total power of one to the channels, in order to maximize the total communication rate.
		A relaxed version over the complex numbers  of this problem can be tackled by the methods presented in this article. So we consider the following optimization problem. For any parameter vector $u\in \mathbb{C}^n,$ 
		\[
		\begin{cases}
		\text{minimize } \displaystyle\sum_{i=1}^n -\log(u_i+x_i),\\
		\text{subject to }x\in \mathcal{L},
		\end{cases}
		\] where $\mathcal{L}$ is a suitable linear space and $\log$ is some fixed branch of the complex logarithm. We get that all the critical points of the system (local minimizers and maximizers) satisfy that
		\[
		\begin{cases}
		\nabla \displaystyle\sum_{i=1}^n -\log(u_i+x_i)\in \mathcal{N}_x \mathcal{L},\\
		x\in \mathcal{L}.
		\end{cases}
		\]
		That is equivalent to
		\[
		\begin{cases}
		\left(\frac{1}{u_1+x_1},\ldots,\frac{1}{u_n+x_n}\right)\in \mathcal{N}_x \mathcal{L},\\
		x\in \mathcal{L}.
		\end{cases}
		\]
		%So we get that $x\in \mathcal{L}$ is an critical point of the system if and only if there exists $y\in \mathcal{N}_x \mathcal{L}$, such that $\left(\frac{1}{u_1+x_1},\ldots,\frac{1}{u_n+x_n}\right)=y$.
		We see that in this case Condition~\ref{Condition_on_d} is satisfied, so for $(x,y)\in\mathbb{C}^n\times \mathbb{C}^n\setminus \mathcal{H}$, where $\mathcal{H}$ is defined by $y_1\cdot\ldots\cdot y_n=0$, there is a unique $u$, such that the above condition is satisfied, namely
		\[\displaystyle u=\Gamma(x,y)=\left(\frac{1}{y_1}-x_1,\ldots, \frac{1}{y_n}-x_n\right).\]
		
	\end{example}
	In the next section we introduce the conormal variety and construct the graph of $\Gamma$ over it.
	
	\subsection{Conormal Variety and $\Gamma$-correspondence}\label{ss:conormal}
	
	\JIR{We begin this section by recalling cornormal varieties and in the case of affine cones and projective varieties,
we refer the reader to \cite[Section 5]{DHOST16}, \cite[Chapter 1]{GKZ} for a complete discussion.
For general affine varieties we continue the discussion.
}
	We have seen so far that pairs of points $(x,y)$ such that $x\in X_{\reg}$ and $y\in \mathcal{N}_xX$ play a crucial role in our analysis. The closure of the collection of all such pairs with $x\in X_{\reg}$  \JIR{we call} the (affine) \demph{conormal variety} and we denote it by $\Conormal(X)$.
	\JIR{In an analogous way to \cite[Chapter 1]{GKZ} the affine cornormal for an irreducible variety is defined \cite[Def 4.6.1.7]{Land} as}	
	\begin{equation}\label{conormal}
	\Conormal(X)=\overline{\{(x,y)\in \mathbb{C}^n_x\times \mathbb{C}^n_y,\ \ x\in X_{\reg},\ y\in\mathcal{N}_xX\}}.
	\end{equation}
	We use the notation $\mathbb{C}^n_x\times \mathbb{C}^n_y$ instead of just simply $\mathbb{C}^n\times \mathbb{C}^n$ to keep track that the first $n$ tuple of coordinates represents a point $x$ and the second $n$ tuple of coordinates represents a point $y$.
	
	There is a natural pair of projections $\pi_1:\Conormal(X)\to \mathbb{C}^n_x$ to the first $n$-tuple of coordinates and $\pi_2:\Conormal(X)\to \mathbb{C}^n_y$ to the second $n$-tuple of coordinates. The image of the first projection is the variety $X$ itself and the closure of the image of the second projection $Y:=\overline{\pi_2(\Conormal(X))}$.
	 \JIR{In the case of affine cones (or projective varieties), is called the {dual variety} of $X$, see for instance~\cite[Section 5.4.2]{RS13} and \cite[Chapter 1, Section 1]{GKZ}}
	
	\[\begin{tikzcd}
	&\Conormal(X)\arrow{ld}[swap]{\pi_1}\arrow{rd}{\pi_2} \\
	X\subseteq \mathbb{C}^n_x & & Y\subseteq \mathbb{C}^n_y
	\end{tikzcd}
	\]
\JIR{
	Similar to \cite[Theorem 4.1]{DHOST16}) the first projection restricted to $X_{\reg}$ is an affine vector bundle of rank equal to the codimension of $X$.
	Since the sum of the codimension and dimension equals $n$ it results that dimension of the conormal inside $\CC^n_x\times \CC^n_y$ is $n$. 
}	

\JIR{
When $X$ is a reducible variety, its conormal variety is defined to be the union of conormal varieties of each irreducible component. 
	}

\JIR{
\begin{remark}
When $X$ is an irreducible affine cone it makes sense to consider its cormal variety inside in $\mathbb{P}^{n-1}\times \mathbb{P}^{n-1}$:
\[
	\{ (\hat x, \hat y) \in \mathbb{P}^{n-1}\times \mathbb{P}^{n-1} : 
	\hat x \in \hat X_{\reg}  \text{ and } y\in \mathcal{N}_x X
	\} 
\]
This biprojective variety has codimension $n$ and dimension $n-2$. 
Projecting to the second factor $\mathbb{P}^{n-1}$ yields a subvariety. 
When this subvariety is a hypersurface, it is the dual variety of $X$. 
\end{remark}	
}
	
%    	%the conormal variety is an irreducible variety of dimension $n$ inside $\mathbb{C}_x^n \times \mathbb{C}^n_y$. 
%    	 The first projection is an affine vector bundle of rank $c$ over $X_{\reg}$ (where $c$ is the codimension of $X$). %Over generic $y_0 \in \mathbb{C}^n_y$, the second projection $\pi_2: \Conormal(X) \to \mathbb{C}^n_y$ has finite fibers $\pi_2^{-1}(y_0)$ of cardinality equal to the \demph{Euclidean Distance Degree}~\cite{DHOST16} of $X$.
	
	So we have that $x\in X_{\reg}$ is a solution to \eqref{CritEqu} if and only if there exists a point $(x,y)$ in $\Conormal(X)\setminus\mathcal{H}$ such that $u=\Gamma(x,y)$, equivalently such that $((x,y),u)$ is a point on the closure of the graph of $\Gamma$ over the conormal variety $\Conormal(X)\setminus\mathcal{H}$. Let us remember that $\Gamma: \Conormal(X)\setminus\mathcal{H}\subseteq \mathbb{C}_x^n\times \mathbb{C}_y^n \to \mathbb{C}_u^n$.
	We call the closure of the graph of $\Gamma$ over $\Conormal(X)\setminus\mathcal{H}$ \demph{ the $\Gamma$-correspondence}. 
	More formally $\Graph(\Gamma)$ is the closure of all triples
\begin{equation}\label{eq:gamm-graph}
	\{(x,y,u)\text{ such that, }\ (x,y)\in \Conormal(X)\setminus\mathcal{H} \text{ and } u=\Gamma(x,y)\}.
\end{equation}
	
	\begin{remark}
		In the case of classical distance optimization on varieties, like in Example~\ref{dist_opt_exam}, we have that $\Gamma(x,y)=x+y$ and the $\Gamma$-correspondence is called the \JIR{extended} ED (Euclidean Distance)-correspondence, 
		\JIR{while its projection onto $\CC^n_x\times\CC^n_u$ is the ED correspondence}, 
		see \cite[Section $4$]{DHOST16}.
	\end{remark} 
	We have the following diagram of projections from the $\Gamma$-correspondence.
	
	\[\begin{tikzcd}
	& &\Graph(\Gamma)\subseteq \mathbb{C}^n_x \times \mathbb{C}^n_y \times \mathbb{C}^n_u \arrow{ld}[swap]{\pi_{12}}\arrow{rd}{\pi_3}& \\
	&\Conormal(X)\subseteq \mathbb{C}^n_x \times \mathbb{C}^n_y\arrow{ld}[swap]{\pi_1}\arrow{rd}{\pi_2}& & \mathbb{C}_u^n \\
	X \subseteq \mathbb{C}^n_x& & Y\subseteq \mathbb{C}^n_y& 
	\end{tikzcd}
	\]
	Because $\Conormal(X)$ is an $n$-dimensional variety,
	$\Graph(\Gamma)$ is an $n$-dimensional variety as well inside $\mathbb{C}^n_x\times \mathbb{C}^n_y \times \mathbb{C}^n_u$. 
	
	\JIR{
	 If $\pi_3$ is a dominant map, then $\pi_3$ is a generically finite map. }
%	By Condition~\ref{Condition_on_d}, we have that $\pi_3$ is a generically finite map. 

	\section{Data loci}\label{Sec_Data_loci}
	
	In this article we want to determine the set of all parameters $u\in \mathbb{C}_u^n$, such that the optimization problem \eqref{MinProblem} has at least one critical solution in a given subvariety of $X$. 
	\JIR{
	Let $A\subseteq X$ be a subvariety of $X$ and suppose that optimization problem \eqref{MinProblem} \JIR{satisfies} Condition~\ref{Condition_on_d}.
	 In this case, we define the \demph{data locus of $A$} to be the closure of projection $\pi_3$ (that is into the space $\mathbb{C}_u^n$ of parameters $u$) of triples $(x,y,u)\in \Graph(\Gamma)$, such that $x\in A$.
	 }
	We denote the data locus by $\DL_A$ and formally we have that 
	\[
	\DL_A
	:=
	\overline{\pi_{3}
		\left(
		\Graph(\Gamma)\cap
		\left(A\times\mathbb{C}_{y}^{n}\times\mathbb{C}_{u}^{n}\right)
		\right)
	}.
	\]
	Our main theorem is \JIR{restated} next.
	\begin{thm}[Structure theorem]\label{Structurethm}
		Let $A\subseteq X$ be a subvariety and suppose that optimization problem \eqref{MinProblem} satisfies Condition~\ref{Condition_on_d}. Then, we have that
		\[
		\DL_A=\overline{\Gamma(\Conormal(X)\cap\Conormal(A)\setminus\mathcal{H})}\subset\mathbb{C}^n,
		\]
		where $\mathcal{H}$ is as in Condition~\ref{Condition_on_d}.
	\end{thm}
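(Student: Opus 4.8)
Write $\mathcal{C}:=\Conormal(X)\cap\Conormal(A)$. The plan is to establish the two inclusions $\overline{\Gamma(\mathcal{C}\setminus\mathcal{H})}\subseteq\DL_A$ and $\DL_A\subseteq\overline{\Gamma(\mathcal{C}\setminus\mathcal{H})}$ separately, exploiting the diagram of projections $\Graph(\Gamma)\xrightarrow{\pi_{12}}\Conormal(X)\xrightarrow{\pi_1}X$ and $\Graph(\Gamma)\xrightarrow{\pi_3}\mathbb{C}^n_u$. Two structural facts are used throughout: since by \eqref{eq:gamm-graph} the variety $\Graph(\Gamma)$ is the closure of the graph of the rational map $\Gamma|_{\Conormal(X)}$, the projection $\pi_{12}$ is birational and restricts to an isomorphism over $\Conormal(X)\setminus\mathcal{H}$ under which $\pi_3$ agrees with $\Gamma$; and over $X_{\reg}$ the map $\pi_1$ is an affine vector bundle of rank $\operatorname{codim}X$, so the fibre of $\Conormal(X)$ over $x\in X_{\reg}$ is $\mathcal{N}_x X$. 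The geometric input relating $A$ to $X$ is that $A\subseteq X$ forces $T_xA\subseteq T_xX$, hence $\mathcal{N}_x X\subseteq\mathcal{N}_x A$, for every $x\in A_{\reg}\cap X_{\reg}$; consequently $\mathcal{C}$ contains the closure $\mathcal{C}_0$ of $\{(x,y):x\in A_{\reg}\cap X_{\reg},\ y\in\mathcal{N}_x X\}$, its ``main stratum'', of dimension $\dim A+\operatorname{codim}X$.

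The inclusion $\overline{\Gamma(\mathcal{C}\setminus\mathcal{H})}\subseteq\DL_A$ is direct. For $(x,y)\in\mathcal{C}\setminus\mathcal{H}$ we have $(x,y)\in\Conormal(X)\setminus\mathcal{H}$, so $(x,y,\Gamma(x,y))\in\Graph(\Gamma)$; and $(x,y)\in\Conormal(A)$ together with $\pi_1(\Conormal(A))=A$ gives $x\in A$. Hence $(x,y,\Gamma(x,y))\in\Graph(\Gamma)\cap(A\times\mathbb{C}^n_y\times\mathbb{C}^n_u)$, so $\Gamma(x,y)=\pi_3(x,y,\Gamma(x,y))$ lies in the set whose closure is $\DL_A$; now take closures.

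For the reverse inclusion it is enough to show $\pi_3(W)\subseteq\overline{\Gamma(\mathcal{C}\setminus\mathcal{H})}$ for each irreducible component $W$ of $\Graph(\Gamma)\cap(A\times\mathbb{C}^n_y\times\mathbb{C}^n_u)=\pi_{12}^{-1}\bigl(\Conormal(X)\cap(A\times\mathbb{C}^n_y)\bigr)$. If $\pi_{12}(W)\not\subseteq\mathcal{H}$ and the generic point of $W$ lies over $A_{\reg}\cap X_{\reg}$, then by the two structural facts that point has the form $(x,y,\Gamma(x,y))$ with $x\in A_{\reg}\cap X_{\reg}$ and $y\in\mathcal{N}_x X$, so $(x,y)\in\mathcal{C}_0\subseteq\mathcal{C}$ and $\pi_3(W)\subseteq\overline{\Gamma(\pi_{12}(W)\setminus\mathcal{H})}\subseteq\overline{\Gamma(\mathcal{C}\setminus\mathcal{H})}$.

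The real work is in the remaining components: those with $\pi_{12}(W)\subseteq\mathcal{H}$, and --- the delicate ones --- those lying entirely over $A\setminus(A_{\reg}\cap X_{\reg})$, where the fibre of $\Conormal(X)$ may strictly contain that of $\Conormal(A)$, so that $\Conormal(X)\cap(A\times\mathbb{C}^n_y)$ is strictly larger than $\mathcal{C}$ (this already happens when $X$ is a union of two lines). For such a $W$, setting $B:=\pi_1\pi_{12}(W)$ one has $B\subsetneq A$ (the case $A\subseteq X\setminus X_{\reg}$ needing a separate argument), so $W\subseteq\Graph(\Gamma)\cap(B\times\mathbb{C}^n_y\times\mathbb{C}^n_u)$ and $\pi_3(W)\subseteq\DL_B$; by induction on $\dim A$ --- with the base case $\dim A=0$ handled directly, since over a point $q$ the fibre of $\Graph(\Gamma)$ is the closure of the graph of $\Gamma(q,\cdot)$ over the fibre of $\Conormal(X)$ above $q$, which equals the fibre of $\mathcal{C}$ above $q$ --- this gives $\pi_3(W)\subseteq\overline{\Gamma\bigl((\Conormal(X)\cap\Conormal(B))\setminus\mathcal{H}\bigr)}$. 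It then remains to check that this is contained in $\overline{\Gamma(\mathcal{C}\setminus\mathcal{H})}$, i.e., that the extra conormal directions appearing over the smaller variety $B$ are absorbed by $\Gamma$; I would prove this by degenerating a point of $\Conormal(X)$ above $B$ along an arc whose general member lies above $A_{\reg}\cap X_{\reg}$ --- using that $X$ is equidimensional and that an irreducible $A$ is contained in a single component of $X$ --- together with the fact that $\Gamma$ extends continuously off $\mathcal{H}$. The components of $\Graph(\Gamma)$ over $\mathcal{H}$ are disposed of by the same closure identification used in the base case.
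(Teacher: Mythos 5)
Your argument follows essentially the same route as the paper's proof. Both rest on the identity $(\Conormal(X)\setminus\mathcal{H})\cap(A\times\CC^n_y)=(\Conormal(X)\cap\Conormal(A))\setminus\mathcal{H}$, which over $A_{\reg}\cap X_{\reg}$ is exactly your observation that $A\subseteq X$ forces $\mathcal{N}_xX\subseteq\mathcal{N}_xA$, combined with the identification of $\pi_3$ with $\Gamma$ on the graph away from $\mathcal{H}$. The difference is one of granularity: the paper disposes of all points of $A$ outside $A_{\reg}\cap X_{\reg}$ with a one-sentence limiting argument (take a sequence $x_k\to x$ with $x_k\in A_{\reg}\cap X_{\reg}$, repeat the argument, and pass to the limit), whereas you isolate these contributions as separate irreducible components, set up an induction on $\dim A$, and explicitly flag that $\Conormal(X)\cap(A\times\CC^n_y)$ may strictly exceed $\Conormal(X)\cap\Conormal(A)$ over the non-regular locus. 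That flag is well placed: the step you leave as a plan --- degenerating a conormal direction over $B$ along an arc whose general member lies over $A_{\reg}\cap X_{\reg}$ and using continuity of $\Gamma$ off $\mathcal{H}$ --- is precisely the point the paper itself treats only informally, and it is the one place where either version is less than fully rigorous. So your proposal is correct in outline and is essentially the paper's proof; the extra machinery (component decomposition, induction on $\dim A$) buys a more honest accounting of where closures can introduce spurious components, at the cost of leaving the absorption of the extra conormal directions over the bad locus as a sketched, rather than completed, argument.
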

	A special case of this theorem, when $\Gamma(x,y)=(x_i+\frac{1}{w_i}y)_i$, \EH{is presented in the introduction.}
	\begin{proof}
	\JIR{The backbone of the proof is to show these two equalities:
%\begin{multline*}
\begin{equation}\label{eq:pi3}
	\pi_3 (\Graph(\Gamma) \cap ( A \times \CC^n_y \times \CC^n_u) ) 
	 \,\,=\,\,
	\Gamma(\Conormal(X)\setminus \mathcal{H})\, \cap  \, (A\times \CC^n_y)
\end{equation}
\begin{equation}\label{eq:conXA}
	(\Conormal(X)\setminus \mathcal{H} ) \cap (A\times \CC_y^n)
	\,\,=\,\,
	(\Conormal(X) \cap \Conormal(A)) \setminus \mathcal{H}.
\end{equation}
%\end{multline*}	
	}
	
\JIR{First to show \eqref{eq:pi3}, 
	we understand that}
		the graph of $\Gamma$ \eqref{eq:gamm-graph} over $\Conormal(X)\setminus\mathcal{H}$ is
		\[\{(x,y,u)\in \mathbb{C}_x^n\times \mathbb{C}_y^n \times \mathbb{C}_u^n,\ \text{ s.t. } (x,y)\in \Conormal(X)\setminus\mathcal{H} \text{ and } u=\Gamma(x,y)\}.\]
		So we have that $\Graph(\Gamma)\cap(A\times\mathbb{C}_y^n\times\mathbb{C}_u^n)$ equals to
		\[
		\{(x,y,u)\in \mathbb{C}_x^n\times \mathbb{C}_y^n \times \mathbb{C}_u^n,\ \text{ s.t. } x\in A, (x,y)\in \Conormal(X)\setminus\mathcal{H}\text{ and }u=\Gamma(x,y)\}.
		\]
\JIR{Applying $\pi_3$ to each side we get the equality in \eqref{eq:pi3}.}		
		\JIR{Now to show \eqref{eq:conXA}, note that the closure of }		
		 the set of pairs
\JIR{
		\[
		\{(x,y)\in \mathbb{C}_x^n\times\mathbb{C}_y^n,\text{ s.t. }x\in A_{\reg}\cap X_{\reg}\text{ and } (x,y)\in \Conormal(X)\setminus\mathcal{H}\}
		\]
	}
		is equal to \JIR{
		the closure of}
		\[
		\{(x,y)\in \mathbb{C}_x^n\times\mathbb{C}_y^n\setminus\mathcal{H},\text{ s.t. }x\in A_{\reg}\text{ and } y\in \mathcal{N}_xA \}\ \bigcap 
		\]
		\[
		\{(x,y)\in \mathbb{C}_x^n\times\mathbb{C}_y^n\setminus\mathcal{H},\text{ s.t. }x\in X_{\reg}\text{ and } y\in \mathcal{N}_xX\,.\]
\JIR{		In other words, this is equal to }
$\Conormal(X)\cap \Conormal(A)\setminus\mathcal{H}$,
		because $A\subseteq X$ is a subvariety for $x\in A_{\reg}\cap X_{\reg}$ 
we have that		
		 $\mathcal{N}_xX\subseteq \mathcal{N}_xA$.
		\JIR{By \eqref{eq:pi3} and \eqref{eq:conXA},}
		we get that 
		\[\Graph(\Gamma) \cap\left(A\times\mathbb{C}_{y}^{n}\times\mathbb{C}_{u}^{n}\right)\] is equal to
		the graph of $\Gamma$ over $\Conormal(X)\cap \Conormal(A)\setminus\mathcal{H}$.
\JIR{
For a pairs of point $(x,y)$ where $x\not \in A_{\reg}\cap X_{\reg}$ one can take a sequence of $x_k$ converging to $x$ such that $x_k$ are in $A_{\reg}\cap X_{\reg}$. Repeating the argument and taking limits yields the same final result. 
}
		
	\end{proof}
	Now that we know the structure of the data locus we can bound it as a set to be able to extract further properties of it.
	
	\begin{cor}[Bounding the data locus]\label{Bound}
		Let $A\subseteq X$ be a subvariety, then we have that
		\[
		\overline{\Gamma((A\times\{0\})\setminus\mathcal{H})}\subseteq \DL_A\subseteq \overline{\Gamma((A\times Y_A\cap Y)\setminus\mathcal{H})},
		\]
		where $Y_A$ is the closure of $\pi_2(\Conormal(A))$ and $Y$ is the closure of $\pi_2(\Conormal(X))$. 
		Moreover, if $X$ is contained in a hyperplane defined by 
		$h_0+h_1x_1+\cdots + h_n x_n=0$,  
		then 
		\[\overline{
			\Gamma\left(
				A\times\{(h_1,\dots,h_n)\}
				\,\setminus\, \mathcal{H}
				\right)
			}
		\subseteq \DL_A.\]
	\end{cor}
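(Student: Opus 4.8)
The plan is to read off all three containments directly from the Structure Theorem~\ref{Structurethm}, which gives $\DL_A=\overline{\Gamma\left(\Conormal(X)\cap\Conormal(A)\setminus\mathcal{H}\right)}$, together with one elementary principle: since $\Gamma$ restricts to an honest function on $\mathbb{C}^n_x\times\mathbb{C}^n_y\setminus\mathcal{H}$, both $S\mapsto\Gamma(S)$ and $S\mapsto\overline{S}$ are monotone with respect to inclusion of subsets of $\mathbb{C}^n_x\times\mathbb{C}^n_y\setminus\mathcal{H}$. Thus it suffices to sandwich the set $\Conormal(X)\cap\Conormal(A)$ between two product-type sets, intersect with the complement of $\mathcal{H}$, apply $\Gamma$, and take closures.

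For the lower bound I would argue as follows. The normal space $\mathcal{N}_xX$ at a regular point is a linear subspace of $\mathbb{C}^n$, so it contains the zero vector; hence $(x,0)\in\Conormal(X)$ for every $x\in X_{\reg}$. Since $X_{\reg}$ is dense in $X$ (component by component, using the convention that the conormal of a reducible variety is the union of the conormals of its irreducible components), this gives $X\times\{0\}\subseteq\Conormal(X)$. The same reasoning applied to $A$ gives $A\times\{0\}\subseteq\Conormal(A)$, and, because $A\subseteq X$, also $A\times\{0\}\subseteq\Conormal(X)$. Therefore $A\times\{0\}\subseteq\Conormal(X)\cap\Conormal(A)$; removing $\mathcal{H}$, applying $\Gamma$, and taking closure yields $\overline{\Gamma\left((A\times\{0\})\setminus\mathcal{H}\right)}\subseteq\DL_A$ by Theorem~\ref{Structurethm}.

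For the upper bound I would use that $\Conormal(X)\subseteq X\times Y$ and $\Conormal(A)\subseteq A\times Y_A$, which are immediate from the definitions: $\pi_1$ of the conormal lands in the variety itself, while $Y$ and $Y_A$ are by definition the closures of the images under $\pi_2$. Intersecting and using $A\subseteq X$ gives $\Conormal(X)\cap\Conormal(A)\subseteq (X\cap A)\times(Y\cap Y_A)=A\times(Y_A\cap Y)$; removing $\mathcal{H}$, applying $\Gamma$, taking closure, and invoking Theorem~\ref{Structurethm} again gives $\DL_A\subseteq\overline{\Gamma\left((A\times(Y_A\cap Y))\setminus\mathcal{H}\right)}$. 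For the final assertion, suppose $X$ lies in the hyperplane $\{g=0\}$ with $g=h_0+h_1x_1+\cdots+h_nx_n$. Then $g$ vanishes on $X$, so at any $x\in X_{\reg}$ the gradient $\nabla g(x)=(h_1,\dots,h_n)$ is orthogonal to $T_xX$ — differentiate $g$ along curves in $X$ through $x$, or use that $\mathcal{N}_xX=(T_xX)^{\perp}$ at a regular point — hence $(h_1,\dots,h_n)\in\mathcal{N}_xX$. Thus $X_{\reg}\times\{(h_1,\dots,h_n)\}\subseteq\Conormal(X)$, and passing to closure $X\times\{(h_1,\dots,h_n)\}\subseteq\Conormal(X)$; since $A\subseteq X\subseteq\{g=0\}$, the same argument for $A$ gives $A\times\{(h_1,\dots,h_n)\}\subseteq\Conormal(A)$, so $A\times\{(h_1,\dots,h_n)\}\subseteq\Conormal(X)\cap\Conormal(A)$, and the containment follows exactly as in the lower-bound case.

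The only delicate point — the main obstacle — is the bookkeeping around the closure operations in the presence of singular and reducible loci: one must check that $X_{\reg}$ (resp. $A_{\reg}$) is Zariski dense enough in $X$ (resp. $A$) that the closed set $\Conormal(X)$ (resp. $\Conormal(A)$) genuinely contains all of $X\times\{0\}$, $X\times\{(h_1,\dots,h_n)\}$, and so on, and that passing from the set-level image of $\Gamma$ over a subvariety to its Zariski closure preserves each of these containments. Both are routine once one argues component by component and keeps in mind that $\Gamma$ is an actual morphism on $\mathbb{C}^n_x\times\mathbb{C}^n_y\setminus\mathcal{H}$, but it is worth stating explicitly rather than leaving implicit.
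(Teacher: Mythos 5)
Your proposal is correct and follows essentially the same route as the paper's own proof: both reduce all three containments to the Structure Theorem by sandwiching $\Conormal(X)\cap\Conormal(A)$, using $0\in\mathcal{N}_xX$ (resp. $(h_1,\dots,h_n)\in\mathcal{N}_xX$ when $X$ lies in the hyperplane) for the lower bounds and $\Conormal(X)\cap\Conormal(A)\subseteq(X\cap A)\times(Y\cap Y_A)$ for the upper bound. The only cosmetic difference is that the paper handles the passage from $A_{\reg}$ to all of $A$ by taking explicit convergent sequences and using closedness of the conormal variety, whereas you invoke density of the regular locus component by component — these are the same argument.
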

	
	\begin{proof}
\JIR{
To prove the first inclusion, we first note that $0\in\mathcal{N}_xX$ for any $x\in X$.
For any $a\in A$, there exists a sequence of points $a_k$ in $A_{\reg}$ converging to $a$.
Moreover, for each $k$, the pair $(a_k,0)$ is in $\Conormal(A)$. 
Because $\Conormal(A)$ is closed the limit $(a,0)$ is in $\Conormal(A)$.
Now for any point $a\in X$, repeat the argument above to find $(a,0)\in \Conormal(X)$. 
Hence for $a\in A\cap X$ it follows $(a,0)\in \Conormal(A)\cap \Conormal(X)$.
}
By Theorem~\ref{Structurethm} we get that  
	\[
	\overline{\Gamma((A\times\{0\})\setminus\mathcal{H})}\subseteq \DL_A.
	\]
		For the second inequality observe that 
		\[\pi_1(\Conormal(X)\cap \Conormal(A))\subseteq\pi_1(\Conormal(X))\cap \pi_1(\Conormal(A))=X\cap A,\] 
		and 
		\[\pi_2(\Conormal(X)\cap \Conormal(A))\subseteq\pi_2(\Conormal(X))\cap \pi_2(\Conormal(A))=Y_A\cap Y.
		\]
		We also have that $\Conormal(X)\cap \Conormal(A)$ is a subset of
		\[ \pi_1(\Conormal(X)\cap \Conormal(A))\times \pi_2(\Conormal(X)\cap \Conormal(A)).
		\]
		So we get that
		\[
		\Conormal(X)\cap \Conormal(A)\subseteq A\times (Y_A\cap Y).
		\]
		Hence
		\[
		\DL_A=\overline{\Gamma(\Conormal(X)\cap\Conormal(A)\setminus\mathcal{H})}\subseteq \overline{\Gamma((A\times Y_A\cap Y)\setminus\mathcal{H})}.
		\]
		The moreover part follows from the fact that $(h_1,\ldots,h_n)\in\mathcal{N}_aX $ and hence $(h_1,\ldots,h_n)\in\mathcal{N}_a A $ for any $a\in A$. By this we get that $(a,(h_1,\ldots,h_n))$ is an element of  $\Conormal(X)\cap \Conormal(A)$, so by Theorem~\ref{Structurethm} 
		we get that 
		\[\overline{
			\Gamma\left(
				A\times\{(h_1,\dots,h_n)\}
				\,\setminus\, \mathcal{H}
				\right)
			}
		\subseteq \DL_A.
		\]
	\end{proof}
	
	\begin{remark}\label{MLDSL}
		A specialized version of Corollary~\ref{Bound} for $\Gamma(x,y)=x+y$ (that is the Euclidean distance case) and for $A=\mathrm{Sing} X$ can be read as
		\[
		\mathrm{Sing}X\subseteq \DL_{\mathrm{Sing}X} \subseteq \mathrm{Sing}X+Y\cap Y_{\mathrm{Sing}X},
		\] where $Y$ is $\pi_2(\Conormal(X))$, 
		 and $Y_{\mathrm{Sing}X}$ is $\pi_2(\Conormal(\mathrm{Sing}X))$, 
		 and $``+"$ denotes the Minkowski sum of sets. 
		  While the first inclusion is trivial, the second inclusion is a stronger version of the second inclusion in ~\cite[Theorem~1]{EDDSL}, because we do not have the assumption of $X$ being a cone.
	\end{remark}
	
	\begin{remark}
		If $X$ is an affine cone that is an algebraic statistical model in $\mathbb{C}^{n+1}$, then $X$ is always a subset of the hyperplane controlling the sum of coordinates, that is $x_1+\ldots+x_n-x_{n+1}=0$ with $x_{n+1}=1$. 
		The
		algebraic approach to do maximum likelihood estimation on this model is by determining every critical point of the likelihood function on the model's closure. The \demph{(ML) maximum likelihood degree} of $X$ is said to be the number of critical points for generic data. 
		For this parametric optimization problem the corresponding choice of $\Gamma$ is \[
\Gamma(x,y)=x\star y:= (x_1 y_1,\ldots,x_n y_n,x_{n+1} y_{n+1})
\] 
with domain $(\mathbb{C}^{n+1}\times\mathbb{C}^{n+1})\setminus \mathcal{H}$, 
		where $\mathcal H \subset{\CC^{n+1}\times \CC^{n+1}}$ is defined by the equation $x_1\cdot x_2 \cdot\ldots \cdot x_{n+1} = 0$,
		\JIR{and $``\star"$ is called the Hadamard product}.
		
		Let $A$ be the singular locus of $X$. The corresponding data locus is called the \demph{ML Data Singular Locus} (see~\cite{MLDSL}) . 
		Observe that $\mathrm{Sing}X$ (as the whole model) is always a subset of the hyperplane controlling the sum of coordinates, 
		so in this case (the moreover part of ) Corollary~\ref{Bound} reads as
		\[
		(\mathrm{Sing}X\setminus \mathcal{H})\star( 1,1,\ldots,1,-1)\subseteq \DL_{\mathrm{Sing} X}\subseteq (\mathrm{Sing}X\setminus \mathcal{H})\star(Y\cap Y_{\mathrm{Sing}X}),
		\] where \JIR{in this case} $Y$ is the dual variety of $X$ \JIR{because $X$ is an affine cone},  
		$Y_{\mathrm{Sing}X}$ is the dual variety of $\mathrm{Sing}X$\JIR{for similar reasons},
		and $\star$ denotes the Hadamard product.
		A weaker version of this result appears in \cite[Theorem~$1$]{MLDSL}.
	\end{remark}
	\begin{example}[Rank $r$ approximations]\label{rank_var}
		An illustrating example of our main theorem is described next. Let $M^{\leq r}_{n}$ be the variety of $n\times n$ matrices of rank at most $r$. Let $q<r$ and let $A$ be the variety of $n\times n$ matrices of rank at most $q$. Now if we take $\Gamma(x,y)=x+y$, which is exactly the case of Euclidean distance optimization (see \cite[Example~$2.3$]{DHOST16}), then $\DL_{M^{\leq q}_n}$ corresponds to the set of all matrices that have at least one critical rank $r$ approximation that is in fact of rank at most $q$. For $q=r-1$, by \cite[Proposition $9$]{EDDSL}, we know that this set is equal to $M^{\leq n-1}_n$. Now for a general $q$ our structure theorem says that
		\[
		\DL_{M^{\leq q}_n}=\overline{
			\Gamma(\Conormal(M^{\leq r}_n)\cap\Conormal(M^{\leq q}_n)).
		}
		\]
		Now by \cite[Chapter $1$, Prop. $4.11$ and Lemma $4.12$]{GKZ} (and by \cite[Example 5.15]{RS13} for the symmetric case) we know that the conormal variety of  $M_n^{\leq r}$ is the set of pairs
		\[
		\{(A,B)\in M_n\times M_n,\text{ s.t. }A\in M_n^{\leq r}\text{ and }B\in M_n^{\leq n- r}\},
		\]
		and the conormal variety of  $M_n^{\leq q}$ is the set of pairs
		\[
		\{(A,B)\in M_n\times M_n,\text{ s.t. }A\in M_n^{\leq q}\text{ and }B\in M_n^{\leq n- q}\}.
		\]
		So we get that $\Conormal(M^{\leq r}_n)\cap\Conormal(M^{\leq q}_n)$ is equal to
\[
		\{(A,B)\in M_n\times M_n,\text{ s.t. }A\in M_n^{\leq q}\text{ and }B\in M_n^{\leq n- r}\}.
		\]
		Hence $\DL_A$ \JIR{is equal to}
		\[
		\Gamma(\Conormal(M^{\leq r}_n)\cap\Conormal(M^{\leq q}_n))=\Gamma(M_n^{\leq q} \times M_n^{\leq n-r})=M_n^{\leq n-r+q}.
		\]
		In in other words, we have that any matrix of rank at most $n-r+q$ will have at least one matrix of rank at most $q$ among its critical rank $r$ approximations.
		Choosing $q=r-1$ we get that any matrix with zero determinant  will have at least one matrix of rank at most $r-1$ among its critical rank $r$ approximations.
	\end{example}
	\begin{remark}
		Another consequence of Theorem~\ref{Structurethm} is a method on how to find points on $\DL_A$ computationally. First pick a \JIR{generic} point $a\in A$ and pick a set of generators $f_1,\ldots,f_c$ of the ideal of $X$. 
		\JIR{(Note it is not always easy to find such a point especially when using symbolic methods.)}
		 Then compute the Jacobian of $X$ at $a$, that is \[\Jac_x X|_a=\left(\frac{\partial f_i}{\partial x_j}\big|_a\right)_{i,j}.\] 
		Now any vector $v$ in the row span of $\Jac_x X|_a$ will be an element of $\mathcal{N}_a X\cap \mathcal{N}_a A$, hence the pair $(a,v)$ is an element of $\Conormal(X) \cap \Conormal(A)$. Finally by Theorem~\ref{Structurethm} we get that $\Gamma(a,v)$ is an element of $\DL_A$.
	\end{remark}
	\section{Computational examples}\label{Sec_Exa}
	Usually obtaining any knowledge on the conormal variety is very hard. When this is the case determining the data locus computationally helps a lot. Sometimes simply checking if the data locus is or is not the entire space proves certain general statements (see Example~\ref{Hankel_exa}). In what follows we will see a set of examples on how to compute these data loci.
	%The wide range of examples shows the importance as~well.
	\begin{example}[Rational normal surface]\label{rat_norm}
		Let us consider the rational normal surface in $\mathbb{C}^4$ (the twisted cubic surface). It is the image of the monomial map defined by
		\[
		(t_1,t_2)\mapsto (t_1^3, t_1^2t_2,t_1t_2^2, t_2^3).
		\] 
		The closure of the image is not a complete intersection, but it is defined by the vanishing of the three polynomials
		\[
		x_3^2-x_2x_4,\ x_2x_3-x_1x_4,\ x_2^2-x_1x_3.
		\]
		The origin is the only singular point of this variety and its intersection with a generic affine hyperplane is the \textit{moment curve}. We optimize a weighted distance~function
		\[
		\sum_{1\leq i\leq 4} w_i (u_i-x_i)^2,
		\]
		and we are interested in those points in $\mathbb{C}^4$ for which among the critical points of the weighted distance function to the rational normal surface there is at least one point on the moment curve cut out by $x_4=1.$ 
		So we have that $X$ is the rational normal surface and the subvariety $A$ is its cut by the hyperplane $x_4=1$. For this problem there are two natural weights to be considered for the distance function (see \cite[Example $2.7$]{DHOST16}). One is the unit weight $w_i=1$, corresponding to the classical Euclidean distance and the other one is the weight $w_i={3 \choose i-1}$. 
		\JIR{The latter corresponds to the natural metric on the space of symmetric tensors because it is the orthogonally invariant metric in the space $\Sym^3\CC^2$, namely it is the one induced by the action of $\SO(2,\CC)$ onto $\Sym^3\CC^2$.}
		We will choose the weight $w_i={3 \choose i-1}$ and we get that $\Gamma(x,y)=(x_1+\frac{1}{w_1}y_1,\dots, x_n+\frac{1}{w_n}y_n)$.
		After running the computations (see Example~\ref{comput_ex}) we get that the data locus is an irreducible hypersurface of degree~$7$. 
		The real part of an affine slice can be seen in Figure~\ref{RatNorm}.
		\begin{figure}[h]
			\begin{center}
				\vskip -0.3cm
				\includegraphics[scale=0.3]{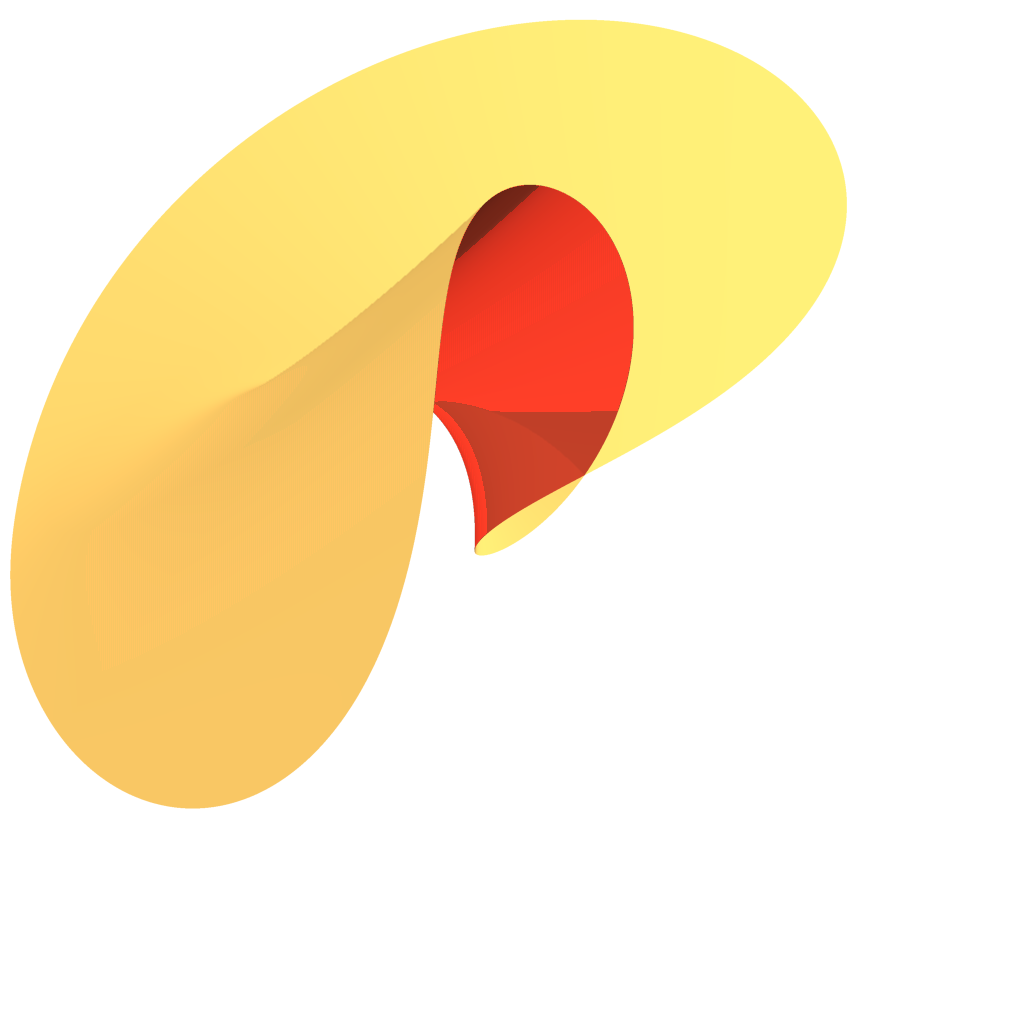}
				\vskip -0.4cm
				\caption{Data locus of the moment curve on the twisted cubic surface}
				\label{RatNorm}
			\end{center}
		\end{figure}
	\end{example}
	\begin{example}\label{comput_ex}
		Below is the {\tt Macaulay2} \cite{M2} code for computing the data locus of the moment curve from Example~\ref{rat_norm}.
		% \begin{leftbar}
		% \verbatiminput{examples/eddegree.txt}\vspace{-15pt}
		% \vspace{14pt}
		% \end{leftbar}
		\begin{leftbar}
			\begin{verbatim}
			n=4;
			kk=QQ[x_1..x_n,u_1..u_n];
			------------------------------------------------------------
			--defining polynomials of X
			(f1,f2,f3) = (x_3^2-x_2*x_4, x_2*x_3-x_1*x_4, x_2^2-x_1*x_3)
			X = ideal(f1,f2,f3);
			c = codim X;
			Jac = jacobian gens X;
			SingX = X+minors(c,Jac);
			------------------------------------------------------------
			g = x_1-1;--additional defining polynomial of the subvariety A
			A = X+ideal(g);
			------------------------------------------------------------
			Y = matrix{{u_1-x_1,3*(u_2-x_2),3*(u_3-x_3),u_4-x_4}};
			--Gamma is incorporated here, by setting y_i=w_i(u_i-x_i)
			S = submatrix(Jac,{0..n-1},{0..numgens(X)-1});
			Jbar = S|transpose(Y);
			projGammaCorr = X + minors(c+1,Jbar);
			--the (x,u) projection of the Gamma Correspondence
			projGammaCorrRegular=saturate(projGammaCorr,SingX);
			------------------------------------------------------------
			PreimDL=projGammaCorrRegular+A;--preimage of the data locus
			DLA = eliminate(toList(x_1..x_n),PreimDL);--data locus of A
			\end{verbatim}
		\end{leftbar}
		Here in the construction of the $\Gamma$-correspondence we use that 
		for $x\in X_{\reg}$, 
		$(w_i(u_i-x_i))_i\in \mathcal{N}_xX$ is equivalent to the matrix \[\begin{pmatrix}
		(w_i(u_i-x_i))_i\\
		\nabla f_1\\
		\vdots\\
		\nabla f_c
		\end{pmatrix}\] 
		having rank less than or equal to the codimension of $X$ (see \cite[Section $2$]{DHOST16}).
	\end{example}
	\begin{example}[Formation control~\cite{AndHelm}]\label{form_control}
		Formation control, for a set of $n$ points (called ``agents") in some given dimension $d$, is concerned with
		defining control laws \JIR{that} ensure that the points will move so that certain inter-agent distances approximate prescribed values as closely as possible.
		One of the challenging questions
		in this area is the following: given the dimension and the number of agents, what is the number of critical formations? This problem can be
		formulated as a distance optimization problem, see \cite[Example $3.7$]{DHOST16}. Here the authors
		proved a formula for the number of critical formations on a line ($d = 1$) for
		any number of agents. Using the notation from the above mentioned article let $X$ denote the variety in $\mathbb{C}^{\binom{p}{2}}$
		with parametric representation
		\begin{equation}
		\label{eq:CMpara}
		d_{ij}\,\, = \,\,(z_i-z_j)^2 \quad \hbox{for} \quad 1 \leq i < j \leq p.
		\end{equation}
		Thus, the points in $X$ record the squared distances among $p$
		interacting agents with coordinates  $z_1,z_2,\ldots,z_p \in \mathbb{C}^d$.
		Then $X$ is defined by the $(d+1) \times (d+1)$-minors of the
		{\em Cayley-Menger matrix}
		\begin{small}
			\[
			\begin{bmatrix}
			2 d_{1p} & d_{1p} {+} d_{2p} {-} d_{12} &  d_{1p} {+} d_{3p} {-} d_{13} &
			\cdots &  d_{1p} {+} d_{p-1,p} {-} d_{1,p-1} \\
			d_{1p} {+} d_{2p} {-} d_{12} & 2 d_{2p} &  d_{2p} {+} d_{3p} {-} d_{23} &
			\cdots &  d_{2p} {+} d_{p-1,p} {-} d_{2,p-1} \\
			d_{1p} {+} d_{3p} {-} d_{13} &  d_{2p} {+} d_{3p} {-} d_{23} & 2 d_{3p} &
			\cdots &  d_{3p} {+} d_{p-1,p} {-} d_{3,p-1} \\
			\vdots & \vdots & \vdots & \ddots & \vdots \\
			d_{1p} {+} d_{p-1,p} {-} d_{1,p-1} \! & \!
			d_{2p} {+} d_{p-1,p} {-} d_{2,p-1} \! & \!
			d_{3p} {+} d_{p-1,p} {-} d_{3,p-1} \! &  \!
			\cdots &    2 d_{p-1,p}
			\end{bmatrix}
			\]
		\end{small}
		Now we are interested in those tuples of prescribed inter-agent distances for which a special critical formation occurs. Finding these data is equivalent to determining a certain data locus.
		The first interesting case is for four agents in the plane, i.e. $p=4$ and $d=2$. We would like to find those data (or prescribed inter-agent distance tuples) for \JIR{which a critical formation is a square.}
		Hence $A$ is the subvariety satisfying the additional constrains
		\[
		d_{12}-d_{23}=d_{23}-d_{34}=d_{34}-d_{14}=0,
		\]
		and we are interested in $\DL_A$. After running the computations we get that the locus of these data forms a degree $13$, codimension $3$ variety generated minimally by $26$ polynomials. The {\tt Macaulay2} code and the resulting polynomials of the computations can be found at~\cite{WebStorage}.
%		\JIR{To your knowledge these findings regarding $\DL_A$ for this choice of $A$ are new.}
	\end{example}
	\begin{example}[Structured low rank approximations~\cite{OttSpSt}]\label{Hankel_exa}
		Structured low-rank approximation is the problem of minimizing the Frobenius distance to a given matrix among all matrices of fixed rank in a linear space of matrices.
		Here we optimize a  distance function
		\[
		\sum_{1\leq i,j\leq n} (u_{ij}-x_{ij})^2.
		\]
		%In this case we have again that $\Gamma(x,y)=(x_{ij}+\frac{1}{w_{ij}} y_{ij})_{i,j}$.
		In this case we have again that $\Gamma(x,y)=(x_{ij}+y_{ij})_{i,j}$.
		The {\em Hankel matrix} $H_n$ of format $n \times n$ has the entry $x_{i+j-1}$ in row $i$ and column $j$.
		For example
		\begin{equation}
		\label{ex:hankel56}
		H_5  = \begin{bmatrix}
		x_1 & x_2 & x_3 \\
		x_2 & x_3 & x_4 \\
		x_3 & x_4 & x_5
		\end{bmatrix}.
		\end{equation}
		Any element of the  space of symmetric $2 {\times} 2 {\times} \cdots {\times} 2$-tensors corresponds to a binary form
		$$ F(s,t) \,= \,
		\sum_{i=1}^n \binom{n-1}{i-1} \cdot x_i \cdot s^{n-i} \cdot t^{i-1}. $$
		For such a binary form the corresponding Hankel matrix $H_n$ has rank $1$ if and only if $F(s,t)$ is the $(n{-}1)$-st power of a linear form. More generally,
		if $F(s,t)$ is the sum of $r$ powers of linear forms, then $H_n$ has rank $\leq r$. This locus corresponds to the $r$-th secant variety of the rational normal curve (see Example~\ref{rat_norm} \EH{and for more details on this topic consult \cite[Section $4.3.7$]{Land}}).
		
%		For low rank approximations of Hankel matrices there are certain weights to be considered (see more \cite[Section $4$]{OttSpSt}). 
		%\JIR{We choose to work with the unit weight vector $w_{ij}=1$, because of computational reasons.}
		%\JIR{Emil wants to update this example to address the referee's point.}\EH{...and maybe Jose wants to run some additional computations.}
		Based on the fact that  the best rank $1$ approximation to real symmetric tensors can be chosen symmetric (see~\cite{Fried}), we are interested in the following problem. What is the set of those $3\times 3$ Hankel matrices with Hankel matrix among their critical rank $1$ approximations? Can the best rank $1$ approximation be chosen to be Hankel?
		
		For some structured low rank approximations the answer to the analogous question is true. For example a symmetric matrix always has a symmetric matrix among its rank $1$ critical approximations (so for $2\times 2$ Hankel matrices this is true).

		An analogy is not true for Hankel matrices of dimension $n\geq 3$ \JIR{by our computations.}
		Choosing the variety $X$ to be $3\times 3$ matrices of rank one and letting the subvariety $A$ to be the rank one Hankel matrices, the data locus $\DL_A$ consists of all matrices that admit at least one critical Hankel rank one approximation. 
		After running our code, we get that there is an irreducible hypersurface of degree $3$ of $3\times 3$ matrices which have a critical rank $1$ Hankel approximation. This hypersurface has a codimension $5$, degree $9$ subvariety of Hankel matrices.
		The {\tt Macaulay2} code and the resulting polynomials of the computations can be found at~\cite{WebStorage}.
	\end{example}
	\begin{example}[Low rank approximation of tensors]\label{low_rank}
		\JIR{In this example we will approximate a tensor with one of low rank. How well we approximate is measured by the Frobenius norm.}
		\JIR{Now we ask the question,} when does it happen that among the critical rank $2$ approximations of an $n_1\times n_2\times\ldots \times n_p$ tensor there is a tensor that is of rank $1$?
		
		The smallest interesting case would be for $2\times 2\times 3$ tensors. 
		\JIR{
		Informally, recall that the rank of a tensor is the smallest smallest $r$ such that it can decomposed as a sum of $r$ pure (rank one) tensors. 
		A tensor is said to have border rank $r$ if it can be expressed as a limit of tensors with rank $r$. When $r=1$ these two notions of rank agree. }
		\EH{For details see~\cite[Section $2.4$]{Land}.}

		Let $Rk_2$ be the variety of border rank at most $2$ tensors, defined by all the $3\times 3$ minors of all the flattenings. \EH{In general if, $V_1, V_2, V_3$ are inner product spaces and $\{1,2,3\}=J_1\cup J_2\}$ is a partition, then the map
		\[V_1\otimes V_2 \otimes V_3\mapsto (\bigotimes_{j\in J_1}V_j)\otimes (\bigotimes_{j\in J_2}V_j),\]
		that sends the set of order-$3$ tensors into the set of order-$2$ tensors (matrices), where the inner product on each factor $\bigotimes_{j\in J_k}V_j$ is the one induced from the inner products on the factors is called a flattening.}
		
		Now let $Rk_1$ be the subvariety of rank at most one tensors, defined by all the $2\times 2$ minors of all the flattenings, \EH{see for instance \cite[Section $7.3$]{Land}}. The singular locus of $Rk_2$ is defined by all the $2\times 2$ minors of all but one flattening. The missing minors come from a $2\times 6$ flattening of the tensor. So we have that $Rk_1\subset \mathrm{Sing}_{Rk_2}$.
		After running the computations we get the following. The data locus of the singular locus is $Rk_2$ itself. So $\DL_{Rk_2}=Rk_2$. This means that all tensors (and only them) of rank at most $2$ have a singular critical rank $2$ approximation. Moreover the data locus of rank one tensors is a subvariety of the previous data locus, that is \[\DL_{Rk_1}\subset \DL_{\mathrm{Sing}_{Rk_2}}=Rk_2.\] 
		Also $\DL_{Rk_1}$ has codimension $3$, degree $40$ and is defined by $10$ polynomials. The {\tt Macaulay2} code and the resulting polynomials of the computations can be found at~\cite{WebStorage}.
	\end{example}

\begin{example}[Kalman Varieties]\label{ex:kalman}
\JIR{
Consider the space of $n_1\times\cdots \times n_k$ tensors. 
The singular vector-tuples of a tensor $U$ is defined in \cite{Lim-Tensor}.
A variety of tensors with singular vector-tuples satisfying certain algebraic conditions is called a Kalman variety \cite{OttSha, OS13, ShaSoVen}.
Kalman varieties are an example of data loci because
the critical points of the Euclidean distance function from $U$ restricted to the variety of rank one tensors correspond to the singular vector-tuples of the tensor $U$ \cite[Equation 6]{Lim-Tensor} and \cite[Section 8]{DHOST16}.
}
\end{example}

	\noindent
	{\bf Acknowledgements.} E.~Horobe\c{t} is supported by Sapientia Foundation - Institute for Scientific Research, Romania, Project No. $17/11.06.2019.$
	We thank Bernd Sturmfels 
	for initiating this project and
	for his helpful comments. 
	We also thank Luca Sodomoco, Kaie Kubjas, 
	Olga Kuznetsova,
	and Elias Tsigaridas for their helpful feedback. 

	We are immensely  grateful for the referees wonderful suggestions which have improved the quality of the article.

	\vspace{1cm}
	\footnotesize {\bf Authors' addresses:}
	
	\smallskip
	
	\noindent Emil Horobe\c{t}, Sapientia Hungarian University of Transylvania \ 
	\hfill {\tt horobetemil@ms.sapientia.ro}\\
	 Targu-Mures - Corunca, nr. 1C. Postal code: 540485%, Op. 9., Cp. 4
	
	\noindent Jose Israel Rodriguez,
	University of Wisconsin-Madison	 \hfill {\tt jose@math.wisc.edu}\\
	Department of Mathematics, Van Vleck Hall,
	480 Lincoln Drive, Madison, WI 53706

\end{document}